\numberwithin{equation}{section}
 \newtheorem{thm}{Theorem}[section]
 \newtheorem{lem}[thm]{Lemma}
 \theoremstyle{definition}
 \theoremstyle{remark}
 \newtheorem{rem}[thm]{Remark}
 \numberwithin{equation}{section}
\begin{document}

\title
 {Gap theorems for K\"ahler-Ricci solitons}

\author{Haozhao Li}

\date{}

%%% ----------------------------------------------------------------------
\maketitle
%%% ----------------------------------------------------------------------
%\tableofcontents

\newcommand{\oo}{\omega}
\def\pbp{\sqrt{-1}\partial\bar\partial}
\newcommand{\Na}{\nabla}
\newcommand{\ee}{\epsilon}
\newcommand{\la}{\lambda}
\newcommand{\La}{\Lambda}
\newcommand{\beqs}{\begin{eqnarray*}}
\newcommand{\eeqs}{\end{eqnarray*}}
\def\ri{\rightarrow}
\newcommand{\beqn}{\begin{eqnarray}}
\newcommand{\eeqn}{\end{eqnarray}}

\noindent {\bf Abstract} \ In this paper, we prove that a gradient
shrinking compact K\"ahler-Ricci soliton cannot have too large Ricci
curvature unless it is K\"ahler-Einstein.

\section{Introduction}
The main purpose of this paper is to give two gap theorems on
gradient shrinking K\"ahler-Ricci solitons with positive Ricci
curvature on a compact K\"ahler manifold with $c_1(M)>0.$ Here we
only discuss compact manifolds.

Since Ricci flow was introduced by R. Hamilton in \cite{[Ha82]},
Ricci solitons have been studied extensively. One interesting
question is to classify all the Ricci solitons with some curvature
conditions, especially with positive curvature operator. It is well
known that there is no expanding or steady Ricci solitons on a
compact Riemannian manifold of any dimension, and no shrinking Ricci
solitons of dimension 2 and 3 (cf. \cite{[Ha88]}\cite{[Ivey]}).
Recently B\"ohm-Wilking in \cite{[BW]} extended Hamilton's maximum
principles, and they essentially proved that there is no nontrivial
shrinking Ricci solitons with positive curvature operator on a
compact manifold.

For the K\"ahler case, by Siu-Yau's result in \cite{[SY]} a
K\"ahler-Ricci soliton with positive holomorphic bisectional
curvature is K\"ahler-Einstein. It is still very interesting to know
how to prove this result via K\"ahler-Ricci flow (cf.
\cite{[Wang]}), or via complex Monge-Amp\'ere equations.
Rotationally symmetric K\"ahler-Ricci solitons have been constructed
by Koiso \cite{[Koiso]} and Cao \cite{[Cao]}. The existence and
uniqueness of K\"ahler-Ricci solitons have been extensively studied
in literature (cf. \cite{[CTZ]}\cite{[TZ1]}\cite{[TZ2]}\cite{[WZ]}).

Let $M$ be a compact K\"ahler manifold with $c_1(M)>0.$ A K\"ahler
metric $g$ with its K\"ahler form $\oo$ is called a K\"ahler-Ricci
solition with respect to a holomorphic vector field $X$ if the
equation
$$Ric(\oo)-\oo=L_X \oo$$
is satisfied. Since $\oo$ is closed, we may write
$$L_X\oo=-\pbp u$$
for some function $u$ with $u_{ij}=u_{\bar i\bar j}=0.$ Then the
Futaki invariant is given by
$$f_X=-\frac 1V\int_M\;Xu\oo^n=\frac 1V\int_M\; |\Na u|^2\oo^n>0. $$
Note that $f_X$ depends only on the K\"ahler class $[\oo]$ and the
holomorphic vector field $X.$

The following is the first result in this paper:

\begin{thm}\label{main1}Let $\oo$ be a K\"ahler-Ricci soliton with a holomorphic vector field
$X$. If \begin{equation} |Ric(\oo)-\oo|<\frac
{-f_X+\sqrt{f_X^2+4f_X}}{2},\label{intro1}\end{equation} then $\oo$
is K\"ahler-Einstein.
\end{thm}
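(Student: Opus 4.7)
The strategy is to derive an integral identity linking $\int_M|Ric(\oo)-\oo|^2\oo^n$ to the Futaki invariant $f_X$, and then combine it with the pointwise hypothesis via Cauchy--Schwarz to force $\nabla u\equiv 0$.

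The first step is an integration-by-parts computation. Writing $Ric(\oo)-\oo = -\partial\bar\partial u$ so that $|Ric(\oo)-\oo|^2 = |u_{i\bar j}|^2$, one moves a $\nabla_{\bar j}$ off the first factor. Since $\nabla u$ is a real gradient of a holomorphic vector field ($u_{ij}=u_{\bar i\bar j}=0$) and since $[\nabla_{\bar i},\nabla_{\bar j}]=0$ in the K\"ahler setting, the remaining divergence simplifies via the trace identity $\Delta u = n-R$ to give
\begin{equation*}
\int_M |Ric(\oo)-\oo|^2 \oo^n = -\int_M u^{\bar k}\nabla_{\bar k}\Delta u\,\oo^n = \int_M u^{\bar k}\nabla_{\bar k} R\,\oo^n.
\end{equation*}
A short commutator calculation combining the soliton equation $R_{i\bar j}=g_{i\bar j}-u_{i\bar j}$, the K\"ahler contracted Bianchi identity, and $u_{ij}=0$ then yields the key identity $\nabla_{\bar k}R = R_{i\bar k}u^i$. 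Substituting and using the soliton equation once more produces
\begin{equation*}
\int_M |Ric(\oo)-\oo|^2 \oo^n = \int_M R_{i\bar j}u^iu^{\bar j}\,\oo^n = \int_M |\nabla u|^2\oo^n - \int_M u_{i\bar j}u^iu^{\bar j}\,\oo^n.
\end{equation*}

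The second step bounds the last term pointwise: by Cauchy--Schwarz, $|u_{i\bar j}u^iu^{\bar j}|\le |u_{i\bar j}|\cdot|\nabla u|^2 = |Ric(\oo)-\oo|\cdot|\nabla u|^2$. Setting $c' := \sup_M|Ric(\oo)-\oo| < c := (-f_X+\sqrt{f_X^2+4f_X})/2$, and using $\int_M|\nabla u|^2\oo^n = f_X V$, we obtain
\begin{equation*}
(c')^2 V \,\ge\, \int_M|Ric(\oo)-\oo|^2\oo^n \,\ge\, (1-c')f_X V,
\end{equation*}
equivalently $(c')^2 + c'f_X - f_X \ge 0$. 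Since $c$ is precisely the positive root of $t^2+tf_X-f_X = 0$, this forces $c'\ge c$, contradicting the hypothesis unless $f_X=0$, in which case $\nabla u\equiv 0$ and $\oo$ is K\"ahler--Einstein. The main technical obstacle is the derivation of $\nabla_{\bar k}R = R_{i\bar k}u^i$, which requires commuting three covariant derivatives on the soliton and making decisive use of $u_{\bar i\bar j}=0$; once that identity and the attendant integration by parts are in place, the algebraic form of $c$ in the theorem is tailored precisely so that the final step is sharp.
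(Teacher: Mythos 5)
Your proposal is correct, and while it ends at exactly the same quadratic inequality as the paper, it gets there by a genuinely different mechanism. Both arguments sandwich the same quantity: since $\nabla\bar\nabla u=\omega-Ric(\omega)$, one trivially has $\frac 1V\int_M|\nabla\bar\nabla u|^2\omega^n\leq \epsilon^2$ with $\epsilon:=\max_M|Ric(\omega)-\omega|$, and the whole game is the complementary lower bound $\frac 1V\int_M|\nabla\bar\nabla u|^2\omega^n\geq (1-\epsilon)f_X$, which forces $\epsilon^2+\epsilon f_X-f_X\geq 0$ and hence $\epsilon\geq \frac{-f_X+\sqrt{f_X^2+4f_X}}{2}$. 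The paper obtains this lower bound spectrally: it first proves the Lichnerowicz-type estimate $\lambda_1\geq 1-\epsilon$ for the first eigenvalue of $\Delta_g$ (Lemma \ref{alem1}, via the Bochner formula and $Ric\geq(1-\epsilon)\omega$), then expands $u$ in eigenfunctions and uses $\int_M(\Delta_g u)^2\omega^n=\int_M|\nabla\bar\nabla u|^2\omega^n\geq \lambda_1\int_M|\nabla u|^2\omega^n$. You obtain it pointwise instead: your key identity $\nabla_{\bar k}R=R_{i\bar k}u^i$ is precisely the differentiated form of the paper's Lemma \ref{lem1} (which the paper proves but uses only for Theorem \ref{main2}), and together with one integration by parts it gives
\begin{equation*}
\int_M|Ric(\omega)-\omega|^2\omega^n=\int_M R_{i\bar j}u^iu^{\bar j}\,\omega^n ,
\end{equation*}
after which Cauchy--Schwarz (equivalently, applying $Ric\geq(1-\epsilon)\omega$ to the vector $\nabla u$ itself) yields the lower bound with no eigenvalue estimate and no spectral decomposition. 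Your route is more self-contained and exploits the soliton structure $u_{ij}=u_{\bar i\bar j}=0$ more directly; the paper's route isolates the eigenvalue bound as a lemma of independent interest, which it then reuses (through inequality (\ref{sec1.1})) at the end of the proof of Theorem \ref{main2}, so the spectral detour is not wasted in the paper's overall economy. A minor point in your favor: your endgame handles the degenerate case explicitly, noting that the dichotomy forces $f_X=0$ and hence $\nabla u\equiv 0$, whereas the paper simply stops at the inequality $\epsilon\geq\frac{-f_X+\sqrt{f_X^2+4f_X}}{2}$ and leaves the contrapositive to the reader.
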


\begin{rem}Theorem \ref{main1} tells us that the Ricci curvature of
a K\"ahler-Ricci soliton can not be too close to the K\"ahler form,
so it gives a gap between K\"ahler-Ricci solitons and
K\"ahler-Einstein metrics.
\end{rem}

In \cite{[Tian97]}, G. Tian proposed a conjecture on the solution
$\oo_t$ of complex Monge-Amp\'ere equations on a K\"ahler manifold
with $c_1(M)>0$ \begin{equation} (\oo+\pbp
\varphi)^n=e^{h_{\oo}-t\varphi}\oo^n, \qquad \oo+\pbp
\varphi>0,\label{0.1}\end{equation} where $h_{\oo}$ is the Ricci
potential with respect to the metric $\oo.$ It is known that $M$
admits a K\"ahler-Einstein metric if and only if (\ref{0.1}) is
solvable for $t\in [0, 1]$. On the other hand, if $M$ admits no
K\"ahler-Einstein metrics, then (\ref{0.1}) is solvable for $t\in
[0, t_0)(t_0\leq 1)$. Tian conjectured that when this case occurs,
$(M, \oo_t)$ converges to a space $(M_{\infty}, \oo_{\infty})$,
which might be a K\"ahler-Ricci soliton after certain normalization.
Observe that the metric $\oo_t$ of (\ref{0.1}) has
$Ric(\oo_t)>(1-t)\oo_t$, the following theorem tells us that if
Tian's conjecture is true, $t_0$ might not be close to $1$.

\begin{thm}\label{main2}Let $\oo$ be a K\"ahler-Ricci soliton with a holomorphic vector field
$X$. There exists a constant $\ee>0$ depending only on the Futaki
invariant $f_X$ such that if
$$Ric(\oo)>(1-\ee)\oo,$$ then $\oo$ is K\"ahler-Einstein.
\end{thm}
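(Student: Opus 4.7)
The strategy is to deduce Theorem~\ref{main2} from Theorem~\ref{main1} by upgrading the one-sided lower Ricci bound $Ric(\oo) > (1-\ee)\oo$ into the sup-norm estimate $\sup_M|Ric(\oo) - \oo| < \frac{-f_X + \sqrt{f_X^2+4f_X}}{2}$, at which point Theorem~\ref{main1} forces $\oo$ to be K\"ahler-Einstein.

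Writing $Ric(\oo) - \oo = -\pbp u$, the hypothesis becomes $u_{i\bar j} < \ee\, g_{i\bar j}$, so every eigenvalue $\nu_i$ of $u_{i\bar j}$ satisfies $\nu_i < \ee$; taking the trace gives $\Delta u < n\ee$, hence $S > n(1-\ee)$. Combined with the cohomological identity $\int_M (S-n)\oo^n = 0$, one obtains the $L^1$ estimate $\int_M|S - n|\,\oo^n \leq 2n\ee V$. To convert this into a $C^0$ bound on $|Ric(\oo)-\oo|$, I would use the standard soliton identity $\Delta u + u - |\Na u|^2 = -f_X$ (valid after normalizing $\int_M u\,\oo^n = 0$): the non-negativity of $|\Na u|^2$ yields $\Delta u \geq -u - f_X$, and Moser iteration applied to the semilinear equation $\Delta u = |\Na u|^2 - u - f_X$ (using the Sobolev constant, uniform because $Ric(\oo) \geq \oo/2$) controls $\sup_M|u|$ by a constant depending only on $f_X$ and $n$. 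A second Moser iteration, applied to $|Ric(\oo) - \oo|^2$ using a Bochner-type subsolution inequality on the soliton (derived from $u_{ij}=0$ together with the soliton equation), then produces $\sup_M|Ric(\oo) - \oo|^2 \leq \delta(\ee, f_X, n)$ with $\delta \to 0$ as $\ee \to 0$. Choosing $\ee$ so that $\delta < \left(\frac{-f_X + \sqrt{f_X^2+4f_X}}{2}\right)^2$ completes the reduction.

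The main obstacle lies in this $L^1$-to-$C^0$ step: one needs a clean subsolution inequality for $|Ric(\oo) - \oo|^2$ (or equivalently for $(S-n)^+$) on the soliton, together with Moser iteration whose constants depend only on $f_X$ and $n$, not on the specific soliton. The uniform Sobolev and Poincar\'e constants coming from $Ric(\oo) \geq \oo/2$ (via Bishop--Gromov and Myers), combined with the K\"ahler-Ricci soliton identities, are the tools that make this feasible.
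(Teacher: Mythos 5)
Your opening reductions are correct, and they reproduce one ingredient of the paper's own proof: from $Ric(\omega)>(1-\epsilon)\omega$ you get $S>n(1-\epsilon)$, and the identity $\int_M(S-n)\,\omega^n=0$ gives the $L^1$ bound $\int_M|S-n|\,\omega^n\le 2n\epsilon V$. A uniform bound on $\sup_M u$ is also attainable (the paper, in Lemma \ref{lem2}, gets it with far less machinery: the gradient estimate $|\nabla\sqrt{u+B}|\le\frac12$ plus Myers' diameter bound). The genuine gap is the step you yourself flag as the main obstacle: the ``Bochner-type subsolution inequality'' for $|Ric(\omega)-\omega|^2$, with Moser constants depending only on $f_X$ and $n$, is never exhibited, and it is not available in the form your argument requires. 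For the full tensor, the Weitzenb\"ock computation for $|u_{i\bar j}|^2$ forces you to commute covariant derivatives; since $u_{ik}=0$, one finds (up to sign conventions) $\nabla_k u_{i\bar j}=R_{k\bar j i}{}^{l}u_l$, so the resulting formula contains the full Riemann tensor contracted against $u_{i\bar j}$, and nothing in your hypotheses (Ricci bounds, $f_X$, $n$) controls $|Rm|$. For the scalar substitute $(S-n)^+$, the identity the soliton structure actually provides (differentiate $\nabla_jS=R_{j\bar k}u_k$, which is the computation in Lemma \ref{lem1}, then use the contracted Bianchi identity) is $\Delta S-u_k\nabla_{\bar k}S=S-|Ric|^2$. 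On $\{S>n\}$ one has $|Ric|^2\ge S^2/n>S$, so there $S$ is a strict \emph{supersolution} of the drift Laplacian: that is the wrong direction, since supersolutions obey minimum principles, and neither the maximum principle nor a Moser sup-estimate can bound $S$ from above using it.

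The only way to salvage the scalar route is to rewrite the identity as $\Delta(S-n)-u_k\nabla_{\bar k}(S-n)+(S-n)=-h$ with $h:=|Ric(\omega)-\omega|^2\ge0$ treated as inhomogeneous data, and then show $\|h\|_{L^q}$ is small for some $q>n$. But $\int_M h\,\omega^n=\int_M|u_{i\bar j}|^2\,\omega^n=\int_M(\Delta u)^2\,\omega^n=\int_M(S-n)^2\,\omega^n$, so proving that smallness (by interpolating $L^1$ against $L^\infty$) requires exactly the two estimates that constitute the paper's proof: the $L^1$ bound you already have, and a uniform upper bound $S\le\Lambda(\lambda,f_X)$, which crucially only needs to stay \emph{bounded} as $\lambda\to1$, not close to $n$ (this is Lemma \ref{lem2}). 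Once those two facts are in hand, the theorem is already finished with no Moser iteration and no appeal to Theorem \ref{main1}: as in the paper, $\frac1V\int_M|\nabla\bar\nabla u|^2\,\omega^n=\frac1V\int_M(S-n)^2\,\omega^n\le(\Lambda-n)\,n(1-\lambda)+n^2(1-\lambda)^2\to0$ as $\lambda\to1$, and this contradicts the Lichnerowicz--Poincar\'e inequality (\ref{sec1.1}), namely $\frac1V\int_M|\nabla\bar\nabla u|^2\,\omega^n\ge\lambda_1 f_X\ge\lambda f_X>0$, unless $f_X=0$, i.e.\ unless $\omega$ is K\"ahler--Einstein. That spectral lower bound is the ingredient your proposal never uses; it converts $L^2$-smallness of $S-n$ directly into the conclusion, which is why the far stronger $C^0$-smallness needed to invoke Theorem \ref{main1} --- precisely the step your argument cannot supply --- is an unnecessary detour.
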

\begin{rem}
In Theorem \ref{main2}, $\ee$ can be expressed explicitly from the
proof.
\end{rem}

{\bf Acknowledgements}: I would like to thank
 Professors F. Pacard for the support during the course of the work, and Professor X. X. Chen, W. Y. Ding and X. H. Zhu
 for their help and enlightening discussions.

\section{Proof of Theorem \ref{main1}}
In this section, we prove Theorem \ref{main1}. Let $g$ be a
K\"ahler-Ricci soliton with
$$Ric-\oo=-\pbp u.$$
By the definition of the Futaki invariant,
$$f_X=-\frac 1V\int_M\; X(u) \oo^n=\frac 1V\int_M\; |\Na u|^2\oo^n>0.$$
Define \begin{equation}
\ee:=\max_M\;|Ric-\oo|,\label{eq3}\end{equation} we have the
following lemma:

\begin{lem}\label{alem1}Let $\la_1$ be the first eigenvalue of $\Delta_g$, then $\la_1\geq 1-\ee$.
\end{lem}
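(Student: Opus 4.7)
The plan is to invoke a standard Lichnerowicz-type eigenvalue estimate via the Bochner formula, using only the pointwise Ricci bound implied by the definition of $\epsilon$; the soliton structure is not actually needed for this lemma.

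First I would extract a pointwise Ricci lower bound from (\ref{eq3}). The tensor $Ric-\omega$ is Hermitian $(1,1)$, so its pointwise norm equals $\bigl(\sum\mu_i^2\bigr)^{1/2}$ where the $\mu_i$ are its (real) eigenvalues. Hence $|\mu_i| \leq |Ric-\omega| \leq \epsilon$, which gives the pointwise inequality $Ric \geq (1-\epsilon)\omega$ as Hermitian $(1,1)$-forms, equivalently $Ric(g) \geq (1-\epsilon)g$ as Riemannian metrics.

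Next, let $\phi$ be a first nonconstant eigenfunction, $\Delta_g\phi = -\lambda_1\phi$, normalized by $\int_M \phi\,\omega^n = 0$. I would apply the K\"ahler Bochner identity obtained by integrating $\frac12 \Delta_g|\Na\phi|^2 = |\nabla^2\phi|^2 + \langle\Na\phi,\Na\Delta_g\phi\rangle + Ric(\Na\phi,\Na\phi)$ over $M$, which yields
$$\lambda_1 \int_M |\Na\phi|^2\,\oo^n \;=\; \int_M |\nabla^2\phi|^2\,\oo^n \;+\; \int_M Ric(\Na\phi,\Na\phi)\,\oo^n.$$
Dropping the nonnegative Hessian term and plugging in $Ric \geq (1-\epsilon)g$ gives
$$\lambda_1\int_M|\Na\phi|^2\,\oo^n \;\geq\; (1-\epsilon)\int_M|\Na\phi|^2\,\oo^n.$$
Since $\phi$ is nonconstant, $\int_M|\Na\phi|^2\,\oo^n>0$ and so $\la_1 \geq 1-\ee$.

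The argument presents no real obstacle beyond bookkeeping: the only care required is to fix consistent normalizations for $\Delta_g$, $|\Na\cdot|$, and the Hermitian tensor norm $|\cdot|$ used in (\ref{eq3}), so that the same constant $\ee$ appears on both sides. The fact that one can simply discard the $|\nabla^2\phi|^2$ term (rather than having to exploit the K\"ahler trace inequality $|\phi_{i\bar j}|^2 \geq \frac1n(\Delta\phi)^2$) shows that the stated bound $\la_1 \geq 1-\ee$ is weaker than the sharp K\"ahler Lichnerowicz estimate and therefore costs nothing to prove.
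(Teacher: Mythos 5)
Your reduction $|Ric-\omega|\le\epsilon\Rightarrow Ric\ge(1-\epsilon)\omega$ is fine, and you are right that the soliton equation plays no role here; but the step you wave off as ``bookkeeping'' is exactly where the proof breaks. Throughout the paper $\Delta_g$ is the \emph{complex} Laplacian $\Delta_g f=g^{i\bar j}f_{i\bar j}$: this is forced by the identities used elsewhere ($R=n-\Delta u$ in Section 3, and $\frac1V\int_M|\nabla\bar\nabla u|^2\omega^n=\frac1V\int_M(\Delta_g u)^2\omega^n$ in Section 2), and it equals \emph{half} of the Riemannian Laplace--Beltrami operator, so $\lambda_1(\Delta_g)=\tfrac12\lambda_1(\Delta_R)$. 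The Bochner formula you quote, $\tfrac12\Delta|\nabla\phi|^2=|\nabla^2\phi|^2+\langle\nabla\phi,\nabla\Delta\phi\rangle+Ric(\nabla\phi,\nabla\phi)$, is the Riemannian one; integrating it and discarding the full Hessian term proves $\lambda_1(\Delta_R)\ge 1-\epsilon$, hence only $\lambda_1(\Delta_g)\ge\tfrac12(1-\epsilon)$ in the paper's normalization. (If instead you insist on complex conventions throughout, the formula as written is false: the Ricci coefficient comes out wrong by a factor of $2$.) This loss is not cosmetic: feeding $\lambda_1\ge\tfrac12(1-\epsilon)$ into (\ref{sec1.1}) yields $\epsilon^2\ge\tfrac12(1-\epsilon)f_X$ and hence the gap constant $\frac{-f_X+\sqrt{f_X^2+8f_X}}{4}$, strictly smaller than the constant asserted in Theorem \ref{main1}.

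The missing idea is precisely the K\"ahler refinement you claim ``costs nothing'': one may discard only the $(2,0)$-part of the Hessian, not all of it. The paper's proof rests on the K\"ahler integral identity
$$\int_M|\nabla\nabla f|^2\,\omega^n=\int_M\Big((\Delta_g f)^2-Ric(\nabla f,\bar\nabla f)\Big)\omega^n,$$
where $|\nabla\nabla f|^2=\sum_{i,j}|f_{ij}|^2$; equivalently, the $(1,1)$-part of the Hessian satisfies $\int_M|f_{i\bar j}|^2\omega^n=\int_M(\Delta_g f)^2\omega^n$ \emph{exactly}, so only $\int_M|f_{ij}|^2\omega^n\ge0$ is thrown away. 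With $\Delta_g f=-\lambda_1 f$ this gives $0\le\big(\lambda_1^2-(1-\epsilon)\lambda_1\big)\int_M f^2\omega^n$, i.e.\ $\lambda_1\ge1-\epsilon$ for the complex Laplacian. In Riemannian terms the lemma asserts $\lambda_1(\Delta_R)\ge 2(1-\epsilon)$, which is out of reach of your argument and, for $n\ge2$, even of the sharp Riemannian Lichnerowicz bound $\frac{2n}{2n-1}(1-\epsilon)$. So your closing remark has it backwards: the stated bound is not weaker than the K\"ahler Lichnerowicz-type estimate --- it \emph{is} that estimate, and the K\"ahler structure is essential to it.
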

\begin{proof}Let $f$ be an eigenfunction satisfying $\Delta_g f=-\la_1
f$, then \beqs 0\leq \int_M\; |\Na\Na f|^2\oo^n=\int_M\;
\Big((\Delta_g f)^2-Ric(\Na f, \bar \Na f) \Big)\oo^n\leq
(\la_1^2-(1-\ee)\la_1)\int_M\; f^2 \oo^n.\eeqs Thus, $\la_1\geq
1-\ee$ and the lemma is proved.
\end{proof}

By Lemma \ref{alem1}, we have \begin{equation} \frac 1V\int_M\;
|\Na\bar\Na u|^2\oo^n\geq (1-\ee)\frac 1V\int_M\; |\Na
u|^2\oo^n=(1-\ee)f_X.\label{sec1.1}\end{equation} In fact, we can
write $u=\sum_{i=1}^{\infty}\; c_i f_i$, where $f_i$ are the
eigenfunctions of $\Delta_g$ such that $$\Delta_g
f_i=-\la_if_i,\qquad \int_M\;f_i^2\oo^n=V.$$ Here
$0<\la_1\leq\la_2\leq \cdots \leq \la_m<\cdots.$ Then
$$\Delta_g u=-\sum_{i=1}^{\infty}\;c_i\la_if_i.$$ Integrating by parts, we have
$$\frac 1V\int_M\; |\Na\bar \Na u|^2\oo^n=\frac 1V\int_M\; (\Delta_g u)^2\oo^n=\sum_{i=1}^{\infty}\;c_i^2\la_i^2
\geq \la_1\sum_{i=1}^{\infty}\; c_i^2\la_i=\frac 1V\int_M\; |\Na
u|^2\oo^n.$$ Hence the inequality (\ref{sec1.1}) holds. On the other
hand, by (\ref{eq3}) we have
$$\frac 1V\int_M\; |\Na\bar\Na u|^2\oo^n\leq \ee^2.$$
Combining this together with (\ref{sec1.1}), we have
$$\ee^2-(1-\ee)f_X\geq 0.$$
Then
$$\ee\geq \frac {-f_X+\sqrt{f_X^2+4f_X}}{2}.$$
The theorem is proved.

\section{Proof of Theorem \ref{main2}}
In this section, we prove Theorem \ref{main2}. Let $g$ be a K\"ahler
Ricci soliton with
$$Ric(\oo)-\oo=-\pbp u,$$
where $u_{ij}=u_{\bar i\bar j}=0.$ We can normalize $u$ such that
\begin{equation} \int_M\; u\;\oo^n=0.\label{x2} \end{equation}

\begin{lem}\label{lem1} $u$ satisfies the following equation
\begin{equation} \Delta_g u+u-|\Na u|^2=-f_X. \label{x1}\end{equation}
\end{lem}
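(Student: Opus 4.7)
The plan is to establish that $F := \Delta_g u + u - |\Na u|^2$ is constant on $M$, and then identify the constant by integrating over $M$. Once constancy is known, integration is immediate: $\int_M \Delta_g u\,\oo^n = 0$ by Stokes' theorem, $\int_M u\,\oo^n = 0$ by the normalization (\ref{x2}), and $\frac{1}{V}\int_M |\Na u|^2\,\oo^n = f_X$ by the definition of the Futaki invariant. Hence $F\cdot V = -V f_X$, giving $F = -f_X$ as desired.

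The main task is therefore to show that $\nabla_k F = 0$ in a local holomorphic frame. Two ingredients from the soliton structure are available: the pointwise identity $u_{ij} = 0$, and the soliton equation $R_{i\bar j} = g_{i\bar j} - u_{i\bar j}$. Since $u_{ij} = 0$, differentiation yields $\nabla_k|\Na u|^2 = g^{i\bar j} u_i u_{k\bar j}$ (the term involving $u_{ki}$ drops out), while $\nabla_k \Delta_g u = g^{i\bar j} u_{i\bar j k}$. To handle the latter, I will commute covariant derivatives via the Ricci identity, which gives $u_{i\bar j k} - u_{ik\bar j} = -R_{k\bar j i}{}^{\ell} u_{\ell}$ (up to sign conventions), and note that $u_{ik\bar j} = \nabla_{\bar j}(u_{ik}) = 0$ since $u_{ik}$ vanishes as a tensor. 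Contracting with $g^{i\bar j}$ and using the Kähler symmetry $g^{i\bar j} R_{k\bar j i\bar m} = R_{k\bar m}$ reduces $g^{i\bar j} u_{i\bar j k}$ to a Ricci-trace of the form $R_{k\bar m} u^{\bar m}$. Substituting the soliton equation $R_{k\bar m} = g_{k\bar m} - u_{k\bar m}$ then splits this into a piece equal to $u_k$ and a piece equal to $-\nabla_k|\Na u|^2$. Assembling everything produces $\nabla_k \Delta_g u = -u_k + \nabla_k|\Na u|^2$, and therefore $\nabla_k F = 0$.

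The main obstacle is executing the Ricci commutation identity correctly with the right sign and the correct Kähler symmetries of the curvature tensor; but once the commutation is in place, the cancellation among the three terms of $F$ is forced by the soliton equation, and the constancy of $F$ follows automatically, making the remaining step a routine integration.
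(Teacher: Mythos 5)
Your proof is correct and takes essentially the same route as the paper: the paper likewise differentiates $\Delta_g u+u-|\nabla u|^2$ in a holomorphic direction, uses $u_{ij}=0$, the Ricci commutation identity (contracted to $u_{i\bar i j}=-R_{j\bar k}u_k$), and the soliton equation $R_{j\bar k}=g_{j\bar k}-u_{j\bar k}$ to force the cancellation proving constancy, then identifies the constant via the normalization $\int_M u\,\omega^n=0$ and $\frac 1V\int_M |\nabla u|^2\,\omega^n=f_X$. The only difference is expository: you spell out the commutation, contraction, and sign bookkeeping that the paper compresses into a single displayed line.
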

\begin{proof}By direct calculation, we have
 \beqs (u_{i\bar i}+u-u_iu_{\bar
i})_j=u_{i\bar ij}+u_j-u_iu_{\bar ij} =-R_{j\bar
k}u_k+u_j-u_iu_{\bar ij} =0. \eeqs Then $\Delta_g u+u-|\Na u|^2$ is
a constant. By (\ref{x2}) and the definition of the Futaki
invariant, (\ref{x1}) holds and the lemma is proved.
\end{proof}

Now we assume $Ric(\oo)\geq \la \, \oo$ with $\la>0,$ so the scalar
curvature $R\geq n\la.$ By Lemma \ref{lem1}, $u$ is bounded from
below. In fact, \begin{equation} u=-f_X+|\Na u|^2-\Delta u=-f_X+|\Na
u|^2+R-n\geq -f_X+n\la-n.\label{u lower}\end{equation} Now we can
prove the following lemma

\begin{lem}\label{lem2}The scalar curvature is uniformly bounded from above, i.e.
$$R\leq \La(\la, f_X),$$ where $\La(\la, f_X)$ is a constant depending
only on $\la$ and $f_X.$ Moreover, $\lim_{\la\ri 1}\La(\la, f_X)$ is
finite.
\end{lem}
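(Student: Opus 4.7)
The plan is to translate the desired upper bound on scalar curvature into an upper bound on the soliton potential $u$, and then to control $u$ by combining Myers' diameter bound with a Lipschitz estimate on $\sqrt{u+C}$ extracted from Lemma \ref{lem1}.

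First I would substitute $\Delta u=n-R$, which is just the trace of $Ric-\oo=-\pbp u$, into Lemma \ref{lem1} to obtain the pointwise identity
\begin{equation*}
R+|\Na u|^2-u=n+f_X.
\end{equation*}
This already gives $R\le n+f_X+u$, so the task reduces to bounding $\max_M u$ by a function of $\la$ and $f_X$ that remains finite as $\la\to 1$. The same identity, combined with $R\ge n\la$ (the trace of $Ric\ge\la\oo$), yields
\begin{equation*}
|\Na u|^2\le u+C,\qquad C:=n+f_X-n\la,
\end{equation*}
and the lower bound (\ref{u lower}) is precisely $u+C\ge 0$. Hence $\sqrt{u+C}$ is globally defined, and the previous inequality rephrased as $|\Na(2\sqrt{u+C})|\le 1$ shows that $2\sqrt{u+C}$ is $1$-Lipschitz on $M$ (the zero locus of $u+C$ causes no trouble since $|\Na u|$ vanishes there as well).

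Next I would apply Myers' theorem: $Ric\ge\la\oo$ produces a diameter bound of the form $\mathrm{diam}(M,g)\le\pi\sqrt{(2n-1)/\la}$. Joining a maximum point of $u$ to a minimum point $x_0$ by a minimizing geodesic and integrating the Lipschitz bound along it,
\begin{equation*}
\sqrt{\max_M u+C}\le\sqrt{u(x_0)+C}+\tfrac{\pi}{2}\sqrt{(2n-1)/\la}.
\end{equation*}
At $x_0$ we have $\Na u=0$ and $\Delta u\ge 0$, so Lemma \ref{lem1} forces $u(x_0)\le -f_X$, i.e.\ $u(x_0)+C\le n(1-\la)$. Squaring and inserting into $R\le n+f_X+u$ furnishes an explicit upper bound $R\le\La(\la,f_X)$. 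As $\la\to 1$ the term $\sqrt{n(1-\la)}$ vanishes while $\pi\sqrt{(2n-1)/\la}$ tends to $\pi\sqrt{2n-1}$, so $\La(\la,f_X)$ stays finite, as required.

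I expect the only point deserving real attention is justifying the $1$-Lipschitz property of $2\sqrt{u+C}$ at points where the potential attains its lower bound; the inequality $|\Na u|^2\le u+C$ renders this harmless. Everything else is a clean synthesis of standard ingredients (Myers' theorem, the scalar-curvature identity, the soliton equation rewritten as Lemma \ref{lem1}), with $f_X$ entering only through the shift $C$ and disappearing from the limiting constant as $\la\to 1$.
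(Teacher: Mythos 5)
Your proposal is correct and takes essentially the same route as the paper: both convert the identity of Lemma \ref{lem1} plus $R\geq n\la$ into a bound $|\Na u|^2\leq u+\mathrm{const}$, deduce that the square root of the shifted potential is Lipschitz, and combine this with Myers' diameter bound and an evaluation at the minimum of $u$ to cap $\max_M u$ and hence $R$. The only deviations are minor: the paper shifts by $B=f_X-n\la+n+1$ so that $u+B\geq 1$ (sidestepping your zero-locus discussion), and bounds $u$ at its minimum by $0$ via the normalization (\ref{x2}), whereas you bound it by $-f_X$ via the maximum principle --- a small refinement that incidentally makes $f_X$ cancel from the final constant.
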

\begin{proof}By the K\"ahler-Ricci soliton equation (\ref{x1}), we
have \begin{equation} |\Na u|^2=\Delta_g u+u+f_X=n-R+u+f_X\leq
u+n+f_X-n\la.\label{u grad}\end{equation} Let $B=f_X-n\la+n+1,$ then
by (\ref{u lower}) $u+B\geq 1.$ Hence
$$\frac {|\Na u|^2}{u+B}\leq 1-\frac {1}{u+B}\leq 1. $$
Let $p\in M$ be a minimum point of $u$, by the normalization
condition (\ref{x2}) $u(p)\leq 0.$ Then for any $x\in M,$ we have
\begin{equation} \sqrt{u+B}(x)-\sqrt{u+B}(p)\leq|\Na \sqrt{u+B}|\;diam(g)\leq
\frac 12 \frac {|\Na u|}{\sqrt{u+B}}\frac {c_n\pi}{\sqrt{\la}}\leq
\frac {c_n\pi}{2\sqrt{\la}},\label{u upper}\end{equation} where
$diam(g)$ is the diameter of $(M, g)$ and $c_n$ is a constant
depending only on $n.$ It follows that
$$u\leq \frac {c_n^2\pi^2}{4\la}+c_n\pi \sqrt{\frac {B}{\la}}.$$ Therefore,
$$ R=n-\Delta u=n+f_X-|\Na u|^2+u \leq n+f_X+\frac {c_n^2\pi^2}{4\la}+c_n\pi \sqrt{\frac {B}{\la}}.
$$

\end{proof}
Now we can finish the proof of Theorem \ref{main2}. By Lemma
\ref{lem2} and $R\geq n\la$, we have  \beqs \frac 1V\int_M\; |\Na
\bar \Na u|^2\oo^n&=&\frac 1V\int_M\;
(R-n)^2\oo^n\\
&=&\frac 1V\int_{\{R>n\}}(R-n)^2\oo^n+\frac 1V\int_{\{R<n\}}(R-n)^2\oo^n\\
&\leq &(\La-n)\,\frac 1V\int_{\{R>n\}}(R-n)\oo^n+n^2(1-\la)^2.\eeqs
On the other hand,
$$0=\int_{\{R>n\}}(R-n)\oo^n+\int_{\{R<n\}}\;
(R-n)\oo^n.$$ Therefore, \beqn \frac 1V\int_M\; |\Na \bar \Na
u|^2\oo^n&=&(\La-n)\frac 1V\int_{\{R<n\}}\;
(n-R)\oo^n+n^2(1-\la)^2\nonumber\\&\leq&
(\La-n)\,n(1-\la)+n^2(1-\la)^2\nonumber\\&\ri& 0, \label{x3}\eeqn as
$\la\ri 1.$ On the other hand, by the inequality (\ref{sec1.1}) we
have
$$\frac 1V\int_M\; |\Na \bar \Na u|^2\oo^n\geq \la f_X>0,$$
which contradicts (\ref{x3}) when $\la $ is sufficiently close to
$1.$ The theorem is proved.

% ------------------------------------------------------------------------
\bigskip
\noindent Universit\'{e} Paris 12 - Val de Marne, 61, avenue du
G\'{e}n\'{e}ral de Gaulle, 94010 Cr\'{e}teil,  France\\
Email: lihaozhao@gmail.com

% ------------------------------------------------------------------------

\begin{thebibliography}{1}
\bibitem{[BW]}C. B\"ohm, B. Wilking: Manifolds with positive curvature operators are space
forms, math.DG/0606187.

\bibitem{[Cao]} H. D. Cao: Existence of gradient Kahler-Ricci soliton, Elliptic and
parabolic methods in geometry, Eds. B.Chow, R.Gulliver, S.Levy,
J.Sullivan, A K Peters, pp.1-6, 1996.



\bibitem{[CTZ]}H. D.  Cao, G. Tian, X. H. Zhu:  K\"ahler-Ricci solitons on compact complex manifolds with
$C_1(M)>0,$ Geom and Funct. Anal., 15 (2005), 697-719.



\bibitem{[Ha82]}R. S. Hamilton: Three-manifolds with positive Ricci
curvature,  J. Diff. Geom.,  17(1982), no. 2, 255-306.
\bibitem{[Ha88]}R. S. Hamilton: The Ricci flow on surfaces.
Mathematics and general relativity, 237-262, Comtemp. Math., 71,
Amer. Math. Soc., Providence, RI, 1988.


\bibitem{[Ivey]}T. Ivey: Ricci solitons on compact three-manifolds, Diff. Geom. Appl., 3 (1993),
301-307.
\bibitem{[Koiso]}N. Koiso: On rotationally symmmetric Hamilton's equation for K\"ahler-Einstein
metrics, Recent Topics in Diff. Anal. Geom, Adv. Studies Pure
Math,18-I Academic Press Boston MA, 1990, 327-337.
\bibitem{[Pere]}G. Perelman: Unpublished work on K\"ahler-Ricci flow.

\bibitem{[NT]}N. Sesum, G. Tian: Bounding scalar curvature and diameter
 along the K\"ahler-Ricci flow (after Perelman) and some
 applications.



\bibitem{[SY]}Y. T. Siu,
S. T. Yau: Compact K\"ahler manifolds of positive bisectional
curvature, Invent. Math. 59 (1980), 189-204.



\bibitem{[Tian97]}G. Tian: K\"ahler-Einstein metrics with positive
scalar curvature,  Invent. Math.  130  (1997),  no. 1, 1-37.

\bibitem{[TZ1]}G. Tian, X. H. Zhu: Uniqueness of K\"ahler-Ricci solitons, Acta Math., 184 (2000), 271-305.

\bibitem{[TZ2]}G. Tian, X. H. Zhu: A new holomorphic invariant
and uniqueness of K\"ahler-Ricci solitons. Comment. Math. Helv.
77(2002), 297-325.
\bibitem{[Wang]}Y. Q. Wang: A New Ricci Flow Proof of Frankel
Conjecture. math.DG/0608151v1.
\bibitem{[WZ]}X. J. Wang, X. H. Zhu: K\"ahler-Ricci solitons on toric manifolds with positive first Chern
class, Adv. Math., 188 (2004), no. 1, 87-103.


\end{thebibliography}
\end{document}